\newtheorem{Theorem}{Theorem}
\newtheorem{Corollary}{Corollary}
\newtheorem{Lemma}{Lemma}
\begin{document}

\title{Super FiboCatalan Numbers and their Lucas Analogues}         
\author{Kendra Killpatrick\\ Pepperdine University\\ 24255 Pacific Coast Hwy\\Malibu, CA  90265}

\date{\today}          
\maketitle

\begin{abstract}
Catalan observed in 1874 that the numbers $S(m,n) = \frac{(2m)! (2n)!}{m! n! (m+n)!}$, now called the super Catalan numbers, are integers but there is still no known combinatorial interpretation for them in general, although interpretations have been given for the case $m=2$ and for $S(m, m+s)$ for $0 \leq s \leq 4$.  In this paper, we define the super FiboCatalan numbers $S(m,n)_F = \frac{F_{2m}! F_{2n}!}{F_m! F_n! F_{m+n}!}$ and the generalized FiboCatalan numbers $J_{r,F} \frac{F_{2n}!}{F_n! F_{n+r+1}!}$ where $J_{r,F} = \frac{F_{2r+1}!}{F_r!}$.  In addition, we give Lucas analogues for both of these numbers and use a result of Sagan and Tirrell to prove that the Lucas analogues are polynomials with non-negative integer coefficients which in turn proves that the super FiboCatalan numbers and the generalized FiboCatalan numbers are integers.      
\end{abstract}

\section{Background and Definitions}

The well-known Fibonacci sequence is defined recursively by $F_n = F_{n-1} + F_{n-2}$ with initial conditions $F_0 = 0$ and $F_1 = 1$.  The $n$th Fibonacci number, $F_n$, counts the number of tilings of a strip of length $n-1$ with squares of length 1 and dominoes of length 2.

A second famous sequence, the Catalan sequence,  is defined recursively by $C_n = C_0 C_{n-1} + C_1 C_{n-2} + \cdots + C_{n-2} C_1 + C_{n-1} C_0$ with initial conditions $C_0 = 1$ and $C_1 = 1$.  The Catalan numbers also have an explicit formula given by

\[
C_n = \frac{1}{n+1} \binom{2n}{n}.
\]

Since the Catalan numbers,
\[
\frac{(2n)!}{n! (n+1)!}
\]
are integers, one might wonder if the numbers 
\[
\frac{(2n)!}{n! (n+2)!}
\]
are integers.  Interestingly, these numbers are not necessarily integers but the numbers given by
\[
6\frac{(2n)!}{n!(n+2)!}
\]
do form an integer sequence.  In 1992, Gessel \cite{Ges} showed that, in fact,  the generalized Catalan numbers 
\[
J_r \frac{(2n)!}{n!(n+r+1)!}
\] 
are integers when $J_r$ is chosen to be $(2r+1)!/r!$.

In 2005, Gessel and Xin \cite{GeX} gave a combinatorial interpretation of these numbers for $r=1$ and proved
\[
6\frac{(2n)!}{n!(n+2)!} = 4 C_n - C_{n+1}.
\]

E. Catalan \cite{Cat} observed as far back as 1874 that the numbers
\[
S(m,n)= \frac{(2m)!(2n)!}{m!n!(m+n)!}
\]
are integers, but there is no known combinatorial interpretation for them in general.  Gessel \cite{Ges} called these numbers the {\it{super Catalan numbers}} since $S(1,n)/2$ gives the Catalan number $C_n$.  Note that $S(2,n)/2 = 6\frac{(2n)!}{n! (n+2)!}$.   Allen and Gheorghiciuc \cite{AlG} have given a combinatorial interpretation for $S(m,n)$ in the case $m=2$ and Gheorghiciuc and Orelowitz \cite{GhO} have given a combinatorial interpretation for $T(m,n) = \frac{1}{2} S(m,n)$ for $m=3$ and $m=4$.  Chen and Wang \cite{ChW} have given an interpretation for $S(m, m+s)$ for $0 \leq s \leq 4$.

\subsection{FiboCatalan numbers}

The fibonomial coefficients, an analogue of the binomial coefficients, are defined as 
  
\[
\binom{n}{k}_F = \frac{F_n !}{F_k! F_{n-k}!}
\]
where $F_n! = F_n F_{n-1} \cdots F_2 F_1$.

In 2008, Benjamin and Plott \cite{BeP} gave a combinatorial proof that the fibonomial coefficients are integers, using a notion of staggered tilings.  In 2010, Sagan and Savage \cite{SaS} gave a combinatorial interpretation of the coefficients in terms of tilings associated to paths in a $k$ x $(n-k)$ rectangle.

The FiboCatalan number $C_{n,F}$, first given by Lou Shapiro, is defined as
\[
C_{n,F} = \frac{1}{F_{n+1}} \binom{2n}{n}_F.
\]

Shapiro posed the question about whether these numbers are integers and, if so, whether there is a combinatorial interpretation for them.  The numbers are known to be integers since
\[
C_{n,F} = \binom{2n-1}{n-2}_F + \binom{2n-1}{n-1}_F
\]
but there is still no known combinatorial interpretation for them.

In this paper, we define the {\it{super FiboCatalan numbers}} 
\[
S(m,n)_F= \frac{F_{2m}!F_{2n}!}{F_m!F_n!F_{m+n}!}
\]

and the {\it{generalized FiboCatalan numbers}} as
\[
J_{r,F} \frac{F_{2n}!}{F_n! F_{n+r+1}!}
\]
where $J_{r,F} = F_{2r+1}!/F_r!$.  Note the following relationship between super FiboCatalan numbers and the generalized FiboCatalan numbers:
\[
J_{m-1,F} \frac{F_{2n}!}{F_n! F_{n+m}!} = \frac{F_{2m-1}! F_{2n}!}{F_{m-1}! F_n! F_{n+m}!} = \frac{F_m}{F_{2m}} S(n,m)_F.
\]

\subsection{Lucas analogues}
\vspace{.1in}
The Lucas polynomials $\{ n \}$ are defined in variables $s$ and $t$ as $\{ 0 \} = 0$, $\{ 1 \} = 1$ and for $n \geq 2$ we have $\{ n \} = s \{ n-1 \} + t \{ n-2 \}$.  If $s$ and $t$ are set to be integers then the sequence of numbers given by $\{ n \}$ is called a Lucas sequence.  When $s = t = 1$, the resulting sequence is the Fibonacci sequence.  The lucanomials, an analogue of the binomial coefficients, are then defined as 
\[
\Bigl\{ \begin{array}{c} n\\k \end{array} \Bigr\} = \frac{ \{n\} !}{ \{k\}! \{n-k\}!}
\]
where $\{ n \}! = \{n\} \{n-1\} \cdots \{2\} \{1\}$.  
When $s = t = 1$, $\bigl\{ \begin{array}{c} n\\k \end{array} \bigr\}$ gives the fibonomial coefficients $\binom{n}{k}_F$.

In 2020, Sagan and Tirrell \cite{SaT} gave a new method of proving the lucanomials are polynomials with non-negative integer coefficients by defining a sequence of polynomials, $P_n(s,t)$ called Lucas atoms such that
\[
\{n\} = \prod_{d \vert n} P_d(s,t).
\]

Sagan and Tirrell then prove the following (Theorem 1.1 in their paper):
\begin{Theorem}
Suppose $f(s,t) = \prod_i \{n_i\}$ and $g(s,t) = \prod_j \{k_j\}$ for certain $n_i$, $k_j \in \mathcal{N}$, and write their atomic decompositions as
\[
f(s,t) = \prod_{d \geq 2} P_d(s,t)^{a_d} \ and \ g(s,t) = \prod_{d \geq 2} P_d(s,t)^{b_d}
\]
for certain powers $a_d$, $b_d \in \mathcal{N}$.  Then $f(s,t)/g(s,t)$ is a polynomial if and only if $a_d \geq b_d$ for all $d \geq 2$.  Furthermore, in this case $f(s,t)/g(s,t)$ has nonnegative integer coefficients.
\end{Theorem}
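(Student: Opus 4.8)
The plan is to reduce both sides to their atomic form and read off the quotient exponent by exponent. Starting from the defining relation $\{n\} = \prod_{d \mid n} P_d(s,t)$ and noting $P_1 = \{1\} = 1$ (so only atoms $P_d$ with $d \geq 2$ contribute nontrivially), I would expand
\[
f(s,t) = \prod_i \{n_i\} = \prod_i \prod_{d \mid n_i} P_d(s,t) = \prod_{d \geq 2} P_d(s,t)^{a_d},
\]
where $a_d = \#\{i : d \mid n_i\}$, and likewise $g = \prod_{d \geq 2} P_d^{b_d}$ with $b_d = \#\{j : d \mid k_j\}$. This is exactly the asserted atomic decomposition and confirms $a_d, b_d \in \mathcal{N}$, with only finitely many nonzero. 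Consequently $f/g = \prod_{d \geq 2} P_d^{a_d - b_d}$ as an identity in the field of fractions, so the entire theorem becomes a question of when this product of atom powers is a genuine polynomial.

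For the ``if'' direction, suppose $a_d \geq b_d$ for every $d$. Then each exponent $a_d - b_d$ is a nonnegative integer, so $f/g = \prod_{d \geq 2} P_d^{a_d - b_d}$ is literally a finite product of polynomials and hence a polynomial. The ``furthermore'' clause then follows immediately provided each atom $P_d$ has nonnegative integer coefficients, since a product of polynomials with nonnegative integer coefficients cannot exhibit cancellation and so again has nonnegative integer coefficients. For the ``only if'' direction I would argue contrapositively: if $a_{d_0} < b_{d_0}$ for some $d_0$, pick any irreducible factor $\pi$ of the non-unit $P_{d_0}$ in the UFD $\mathbb{Q}[s,t]$. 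Because distinct atoms are pairwise coprime, $\pi$ divides no $P_d$ with $d \neq d_0$, so the $\pi$-adic valuation of $f$ equals $a_{d_0}\,v_\pi(P_{d_0})$ while that of $g$ equals $b_{d_0}\,v_\pi(P_{d_0})$, which is strictly larger. Hence $g \nmid f$ and $f/g$ is not a polynomial.

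The two facts this reduction rests on — that each $P_d$ is a polynomial with nonnegative integer coefficients, and that distinct atoms are coprime — are where the real work lies, and the first is the main obstacle. Coprimality I would obtain from the Binet substitution $s = X + Y$, $t = -XY$, under which $\{n\} = (X^n - Y^n)/(X-Y)$ factors over the roots of unity just as $x^n - 1$ does; the atom $P_d$ then plays the role of the $d$th cyclotomic polynomial (suitably homogenized to be symmetric in $X,Y$, hence a polynomial in $s$ and $t$), and distinct $P_d$ share no root and therefore no common factor. The nonnegativity of the coefficients of $P_d$ is the genuinely delicate point: Möbius inversion yields $P_n = \prod_{d \mid n} \{d\}^{\mu(n/d)}$, which a priori is only a rational function, and establishing that it is in fact a polynomial with nonnegative coefficients requires the combinatorial and sign analysis that forms the technical core of Sagan and Tirrell's argument. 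Once that input is secured, the bookkeeping above assembles the full statement.
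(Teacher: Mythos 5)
The first thing to note is that this paper does not prove the statement at all: it is Sagan and Tirrell's Theorem~1.1, quoted verbatim as an imported tool (the paper's own contributions, the later theorems on super FiboCatalan numbers, are applications of it). So there is no in-paper proof to compare yours against; the relevant comparison is with Sagan and Tirrell's original argument, and your outline does mirror its structure. Your bookkeeping is sound: the exponent count $a_d = \#\{i : d \mid n_i\}$ follows from $\{n\} = \prod_{d \mid n} P_d(s,t)$ and $P_1 = 1$; the ``if'' direction is immediate once each $P_d$ lies in $\mathbb{N}[s,t]$, since products of polynomials with nonnegative integer coefficients admit no cancellation; and the ``only if'' direction via an irreducible factor $\pi$ of $P_{d_0}$, giving $v_\pi(f) = a_{d_0} v_\pi(P_{d_0}) < b_{d_0} v_\pi(P_{d_0}) = v_\pi(g)$ and hence $g \nmid f$ in the UFD $\mathbb{Q}[s,t]$, is a correct valuation argument given pairwise coprimality of distinct atoms.

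The caveat --- which you state yourself, to your credit --- is that what you have written is a reduction, not a self-contained proof. Both pillars it rests on, the coprimality of distinct atoms and especially the fact that each $P_d$ is a polynomial with nonnegative integer coefficients, constitute essentially all of the content of the theorem; M\"obius inversion yields $P_n = \prod_{d \mid n} \{d\}^{\mu(n/d)}$ only as a rational function a priori, and promoting this to membership in $\mathbb{N}[s,t]$ is the technical core of Sagan and Tirrell's paper (your Binet-substitution picture, identifying $P_d$ with a homogenized cyclotomic polynomial, is indeed the right mechanism for coprimality and for polynomiality over $\mathbb{Q}$, but it does not by itself give nonnegativity of the coefficients). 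In the context of this paper, which cites the entire theorem from \cite{SaT}, treating those atom properties as known is perfectly reasonable, and then your proof is correct and is essentially the original one; as a from-scratch proof, however, the decisive step is missing and the rest is packaging around it.
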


Lucas atoms are irreducible polynomials and have been furthered studied by Alecci, Miska, Murru and Romeo \cite{AMMR}.

The Lucas analogue of the Catalan numbers is given by
\[
C_{ \{n\} } = \frac{1}{ \{n+1\} } \Bigl\{ \begin{array}{c} 2n\\n \end{array} \Bigr\}.
\]

More generally, given two positive integers $a$ and $b$ with $gcd(a,b)=1$, the {\it {rational Catalan number}} is defined as
\[
Cat(a,b) = \frac{1}{a+b} \binom{a+b}{a}.
\]

In this expression, one can set $a=n$ and $b=n+1$ to obtain the usual Catalan numbers.  The Lucas analogue of the rational Catalan numbers is then defined by
\[
Cat \{a,b\} = \frac{1}{\{a+b\}} \Bigl\{ \begin{array}{c} a+b\\a \end{array} \Bigr\}.
\]

The Algebraic Combinatorics Seminar at the Fields Institute \cite{ACS} proved that the q-Fibonacci analogue of Cat(a,b) is a polynomial in $q$ (a method which also works for the Lucas analogue) and in 2020, Sagan and Tirrell \cite{SaT} proved that the Lucas analogue of the rational Catalan numbers is a polynomial with non-negative integer coefficients.  

We can now define the Lucas analogue of the super FiboCatalan numbers as
\[
S \{ m, n \} = \frac{ \{2m\}! \{2n\}!}{ \{m\}! \{n\}! \{m+n\}!}.
\]

and the Lucas analogue of the generalized FiboCatalan numbers as
\[
J_{ \{r\}} \frac{ \{2n\}!}{ \{n\}! \{n+r+1\}!}
\]
where $J_{ \{r\}} = \frac{ \{2r+1\}!}{ \{r\}!}$.

In Section 2 of this paper, we prove that the Lucas analogues of the super FiboCatalan numbers and the generalized FiboCatalan numbers are polynomials with non-negative integer coefficients.  In addition, this gives that the super FiboCatalan numbers and the generalized FiboCatalan numbers are positive integers.  
In Section 3, we give a new identity involving both Fibonacci and FiboCatalan numbers and use it to provide an alternate proof that the generalized FiboCatalan number for $r=1$ is always a positive integer.

\section{The Lucas analogues are polynomials in $\mathbb{N}[s,t]$}

Following the Sagan and Tirrell \cite{SaT} exposition, given a product $f(s,t)$ of Lucas polynomials let
\[
log_d f(s,t) = \text{the power of $P_d(s,t)$ in its Lucas factorization.}
\]

Then Sagan and Tirrell prove the following Lemma (Lemma 3.1 in their paper):
\begin{Lemma}
For $d \geq 2$ we have
\[
log_d \{n\} ! = \lfloor n/d \rfloor.
\]
Furthermore, for integers $m$, $n$, $d$
\[
\lfloor m/d \rfloor + \lfloor n/d \rfloor \leq \lfloor (m+n)/d \rfloor.
\]
\end{Lemma}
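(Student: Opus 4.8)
The plan is to prove the two assertions separately, since the first concerns the atomic valuation $\log_d$ of a Lucas factorial while the second is a self-contained inequality about the floor function.

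For the identity $\log_d\{n\}! = \lfloor n/d\rfloor$, the key observation I would use is that $\log_d$ is additive over products of Lucas polynomials: since each atom $P_d(s,t)$ is irreducible, the Lucas factorization of a product is the product of the individual factorizations, so the exponent of $P_d$ simply adds. Writing $\{n\}! = \{n\}\{n-1\}\cdots\{1\} = \prod_{k=1}^{n}\{k\}$, additivity then gives $\log_d\{n\}! = \sum_{k=1}^{n}\log_d\{k\}$. I would next invoke the defining atomic decomposition $\{n\} = \prod_{d\mid n}P_d(s,t)$: because each divisor of $n$ contributes exactly one factor, we have $\log_d\{k\} = 1$ when $d\mid k$ and $\log_d\{k\} = 0$ otherwise. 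Hence $\log_d\{n\}!$ counts the number of multiples of $d$ in $\{1,2,\dots,n\}$, which is precisely $\lfloor n/d\rfloor$.

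For the floor inequality, I would use division with remainder. Write $m = dq_1 + r_1$ and $n = dq_2 + r_2$ with $0\le r_1,r_2 < d$, so that $q_1 = \lfloor m/d\rfloor$ and $q_2 = \lfloor n/d\rfloor$. Then $m+n = d(q_1+q_2) + (r_1+r_2)$ with $r_1+r_2\ge 0$, and since $q_1+q_2$ is an integer while $(r_1+r_2)/d\ge 0$, taking the floor of $(m+n)/d$ yields $\lfloor (m+n)/d\rfloor \ge q_1+q_2 = \lfloor m/d\rfloor + \lfloor n/d\rfloor$, as claimed.

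I do not expect a serious obstacle here: the floor inequality is entirely routine, and the only point in the first part that requires care is justifying the two facts that drive the count — that $\log_d$ adds over products, and that each $P_d$ occurs with multiplicity exactly one in $\{n\}$. Both follow immediately from the irreducibility of the atoms and the product formula $\{n\} = \prod_{d\mid n}P_d(s,t)$ recorded in the excerpt, so the argument is really a matter of careful bookkeeping rather than any genuine difficulty.
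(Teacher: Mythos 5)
Your proof is correct: deducing $\log_d\{n\}! = \#\{k \le n : d \mid k\} = \lfloor n/d\rfloor$ from additivity of $\log_d$ and the atomic decomposition $\{k\} = \prod_{e \mid k} P_e(s,t)$, and then handling the floor inequality by division with remainder, is exactly the standard argument. Note that the paper itself gives no proof of this statement---it is quoted verbatim as Lemma 3.1 of Sagan and Tirrell---and your argument is essentially the one in that source, so the approaches coincide.
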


Applying this Lemma to the Lucas analogues of the super FiboCatalan numbers we have the following theorems:
\begin{Theorem}
The Lucas analogues of the super FiboCatalan numbers,
\[
S \{ m, n \} = \frac{ \{2m\}! \{2n\}!}{ \{m\}! \{n\}! \{m+n\}!}
\]
are polynomials with non-negative integer coefficients.
\end{Theorem}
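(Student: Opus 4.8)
The plan is to apply Theorem 1 with numerator $f(s,t) = \{2m\}!\,\{2n\}!$ and denominator $g(s,t) = \{m\}!\,\{n\}!\,\{m+n\}!$, so that $S\{m,n\} = f(s,t)/g(s,t)$. By Theorem 1 it suffices to show that for every $d \geq 2$ the exponent of the atom $P_d(s,t)$ in $f$ is at least its exponent in $g$; that is, $\log_d f \geq \log_d g$. Since $\log_d$ is additive over products, Lemma 1 evaluates each factorial contribution as a floor, giving
\[
\log_d f = \lfloor 2m/d \rfloor + \lfloor 2n/d \rfloor
\]
and
\[
\log_d g = \lfloor m/d \rfloor + \lfloor n/d \rfloor + \lfloor (m+n)/d \rfloor .
\]
Thus the whole theorem reduces to the single arithmetic inequality
\[
\lfloor 2m/d \rfloor + \lfloor 2n/d \rfloor \geq \lfloor m/d \rfloor + \lfloor n/d \rfloor + \lfloor (m+n)/d \rfloor
\]
holding for all positive integers $m,n$ and all $d \geq 2$.

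This inequality is the heart of the argument and is where I expect the real work to lie; I would emphasize that the subadditivity half of Lemma 1, namely $\lfloor m/d\rfloor + \lfloor n/d\rfloor \leq \lfloor (m+n)/d\rfloor$, bounds $\lfloor(m+n)/d\rfloor$ from below and so points the wrong way, meaning a sharper estimate is required. I would prove the inequality by separating quotients from remainders: write $m = q_m d + r_m$ and $n = q_n d + r_n$ with $0 \leq r_m, r_n < d$. Then $\lfloor 2m/d\rfloor = 2q_m + \lfloor 2r_m/d\rfloor$, $\lfloor m/d\rfloor = q_m$ (and likewise for $n$), and $\lfloor (m+n)/d\rfloor = q_m + q_n + \lfloor (r_m+r_n)/d\rfloor$. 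Substituting these, every quotient term $q_m, q_n$ cancels from both sides and the inequality collapses to
\[
\lfloor 2r_m/d \rfloor + \lfloor 2r_n/d \rfloor \geq \lfloor (r_m + r_n)/d \rfloor .
\]

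Finally I would dispose of this reduced inequality by a short case analysis on its right-hand side, which lies in $\{0,1\}$ since $0 \leq r_m + r_n < 2d$. If $r_m + r_n < d$, the right-hand side is $0$ and there is nothing to prove, as the left-hand side is a sum of floors of non-negative reals. If instead $r_m + r_n \geq d$, then at least one of $r_m, r_n$ is at least $d/2$, which forces the corresponding term $\lfloor 2r_m/d\rfloor$ or $\lfloor 2r_n/d\rfloor$ to be at least $1$, so the left-hand side is at least $1$ and the inequality again holds. This establishes $\log_d f \geq \log_d g$ for all $d \geq 2$, and Theorem 1 then delivers at once that $S\{m,n\}$ is a polynomial and that its coefficients are non-negative integers.
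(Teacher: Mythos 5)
Your proposal is correct and follows essentially the same route as the paper: invoke the Sagan--Tirrell atom factorization (Theorem 1), use Lemma 1 to reduce everything to the floor inequality $\lfloor 2m/d\rfloor + \lfloor 2n/d\rfloor \geq \lfloor m/d\rfloor + \lfloor n/d\rfloor + \lfloor (m+n)/d\rfloor$, and settle it via the Division Algorithm and a case analysis. Your cancellation of the quotients to get a remainders-only inequality, with two cases on whether $r_m + r_n \geq d$, is just a slightly tidier organization of the paper's three cases on the sizes of the remainders relative to $d/2$.
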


\begin{proof}
Applying the previous lemma gives, for any $d \geq 2$,
\[
log_d(\{m\}! \{n\}! \{m+n\}!) = \lfloor m/d \rfloor + \lfloor n/d \rfloor + \lfloor (m+n)/d \rfloor 
\]
and \[
log_d(\{2m\}! \{2n\}!) = \lfloor 2m/d \rfloor + \lfloor 2n/d \rfloor.
\]

By the Division Algorithm, let $m = kd + r$ for $0 \leq r < d$ and $n = ld + s$ for $0 \leq s < d$.  Then we can proceed by cases.

\vspace{.1in}
Case 1:  Let $r < d/2$ and $s < d/2$.  Then $\lfloor m/d \rfloor = k$, $\lfloor n/d \rfloor = l$ and $\lfloor (m+n)/d \rfloor = k+l$.  In this case, $\lfloor 2m/d \rfloor = 2k$ and $\lfloor 2n/d \rfloor = 2l$ thus 
\begin{align*}
log_d(\{m\}! \{n\}! \{m+n\}!) &= \lfloor m/d \rfloor + \lfloor n/d \rfloor + \lfloor (m+n)/d \rfloor \\
 &= k + l + (k+l) = 2k + 2l \\
&= \lfloor 2m/d \rfloor + \lfloor 2n/d \rfloor \\
&= log_d(\{2m\}! \{2n\}!).
\end{align*}

\vspace{.1in}
Case 2:  Without loss of generality, let $d/2 \leq r < d$ and $s < d/2$.  Then $\lfloor m/d \rfloor = k$, $\lfloor n/d \rfloor = l$ and $\lfloor (m+n)/d \rfloor \leq k+l+1$.  In this case, $\lfloor 2m/d \rfloor = 2k + 1$ and $\lfloor 2n/d \rfloor = 2l$ thus 
\begin{align*}
log_d(\{m\}! \{n\}! \{m+n\}!) &= \lfloor m/d \rfloor + \lfloor n/d \rfloor + \lfloor (m+n)/d \rfloor  \\
&\leq k + l + (k+l+1) \\
&= 2k + 1 + 2l \\
&= \lfloor 2m/d \rfloor + \lfloor 2n/d \rfloor \\
&=log_d(\{2m\}! \{2n\}!).
\end{align*}

\vspace{.1in}
Case 3:  Let $d/2 \leq r < d$ and $d/2 \leq s < d$.  Then $\lfloor m/d \rfloor = k$, $\lfloor n/d \rfloor = l$ and $\lfloor (m+n)/d \rfloor = k+l+1$.  In this case, $\lfloor 2m/d \rfloor = 2k + 1$ and $\lfloor 2n/d \rfloor = 2l + 1$ thus 
\begin{align*}
log_d(\{m\}! \{n\}! \{m+n\}!) &= \lfloor m/d \rfloor + \lfloor n/d \rfloor + \lfloor (m+n)/d \rfloor  \\
&= k + l + (k+l+1) \\
&= 2k + 2l + 1 \leq (2k+1) + (2l+1) \\
&= \lfloor 2m/d \rfloor + \lfloor 2n/d \rfloor\\
&=log_d(\{2m\}! \{2n\}!).
\end{align*}

\end{proof}

\begin{Theorem}
The Lucas analogues of the generalized FiboCatalan numbers,
\[
J_{ \{r\}} \frac{ \{2n\}!}{ \{n\}! \{n+r+1\}!}
\]
where $J_{ \{r\}} = \frac{ \{2r+1\}!}{ \{r\}!}$
are polynomials with non-negative integer coefficients.
\end{Theorem}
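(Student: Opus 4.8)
The plan is to mirror the proof of the previous theorem and appeal directly to the Sagan--Tirrell theorem together with the Lemma above. First I would absorb the prefactor $J_{\{r\}} = \{2r+1\}!/\{r\}!$ into the quotient, rewriting the quantity as
\[
J_{\{r\}} \frac{\{2n\}!}{\{n\}!\{n+r+1\}!} = \frac{\{2r+1\}!\,\{2n\}!}{\{r\}!\,\{n\}!\,\{n+r+1\}!},
\]
so that both numerator and denominator are products of Lucas polynomials. By the Sagan--Tirrell theorem it then suffices to show $\log_d(\text{numerator}) \geq \log_d(\text{denominator})$ for every $d \geq 2$, and in that case the theorem gives for free that the quotient lies in $\mathbb{N}[s,t]$.

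Next, applying the Lemma above to each factorial reduces the problem to the purely arithmetic inequality
\[
\left\lfloor \frac{2r+1}{d}\right\rfloor + \left\lfloor \frac{2n}{d}\right\rfloor \;\geq\; \left\lfloor \frac{r}{d}\right\rfloor + \left\lfloor \frac{n}{d}\right\rfloor + \left\lfloor \frac{n+r+1}{d}\right\rfloor
\]
for all integers $n, r \geq 0$ and all $d \geq 2$. To handle this I would invoke the Division Algorithm, writing $n = qd + a$ and $r = pd + b$ with $0 \leq a, b \leq d-1$. Using $\lfloor 2n/d\rfloor = 2q + \lfloor 2a/d\rfloor$, $\lfloor (2r+1)/d\rfloor = 2p + \lfloor (2b+1)/d\rfloor$, and $\lfloor (n+r+1)/d\rfloor = p + q + \lfloor (a+b+1)/d\rfloor$, the contributions of $q$ and $p$ match on both sides and cancel, collapsing the inequality to the residue statement
\[
\left\lfloor \frac{2b+1}{d}\right\rfloor + \left\lfloor \frac{2a}{d}\right\rfloor \;\geq\; \left\lfloor \frac{a+b+1}{d}\right\rfloor.
\]

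Finally I would observe that, because $0 \leq a, b \leq d-1$, each of the three floors above is either $0$ or $1$; in particular the right-hand side equals $1$ exactly when $a+b \geq d-1$ and is $0$ otherwise. The inequality is trivial when the right-hand side is $0$, so the only case to check is $a + b \geq d - 1$, where I must show the left-hand side is at least $1$. This follows by contradiction: if both left-hand terms vanished, then $2a \leq d-1$ and $2b \leq d-2$, forcing $a + b \leq d - 3/2$ and hence $a + b \leq d-2$, contradicting $a + b \geq d-1$. The main subtlety, and the one place the argument departs from the symmetric super FiboCatalan computation, is the asymmetry introduced by the odd index $2r+1$ and the shift in $n+r+1$; this is why I keep careful track of the $+1$ in $\lfloor (2b+1)/d\rfloor$, which already reaches the value $1$ when $b \geq (d-1)/2$. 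Once this bookkeeping is in place the residue inequality closes cleanly and the theorem follows.
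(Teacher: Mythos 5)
Your proposal is correct and follows essentially the same route as the paper: both invoke the Sagan--Tirrell theorem together with the Lemma $\log_d \{n\}! = \lfloor n/d \rfloor$, then apply the Division Algorithm and analyze residues to verify $\lfloor (2r+1)/d \rfloor + \lfloor 2n/d \rfloor \geq \lfloor r/d \rfloor + \lfloor n/d \rfloor + \lfloor (n+r+1)/d \rfloor$ for all $d \geq 2$. Your bookkeeping --- cancelling the quotient parts first and then settling the residue inequality by a clean $0$/$1$ contradiction argument --- is in fact tidier than the paper's three-case split on whether each residue is below or above $d/2$ (whose stated floor equalities hold only with inequality in some subcases), but it is the same underlying argument.
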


\begin{proof}
Applying the previous lemma gives, for any $d \geq 2$,
\[
log_d(\{r\}! \{n\}! \{n+r+1\}!) = \lfloor r/d \rfloor + \lfloor n/d \rfloor + \lfloor (n+r+1)/d \rfloor 
\]
and \[
log_d(\{(2r+1)\}! \{2n\}!) = \lfloor (2r+1)/d \rfloor + \lfloor 2n/d \rfloor.
\]

By the Division Algorithm, let $r = kd + t$ for $0 \leq t < d$ and $n = ld + s$ for $0 \leq s < d$.  Then we can again proceed by cases.

\vspace{.1in}
Case 1:  Let $t < d/2$ and $s < d/2$.  Then $\lfloor r/d \rfloor = k$, $\lfloor n/d \rfloor = l$ and $\lfloor (n+r+1)/d \rfloor = k+l+1$.  In this case, $\lfloor (2r+1)/d \rfloor = 2k+1$ and $\lfloor 2n/d \rfloor = 2l$ thus 
\begin{align*}
log_d(\{r\}! \{n\}! \{n+r+1\}!) &= \lfloor r/d \rfloor + \lfloor n/d \rfloor + \lfloor (n+r+1)/d \rfloor \\
 &= k + l + (k+l+1) = 2k + 2l + 1 \\
&= \lfloor (2r+1)/d \rfloor + \lfloor 2n/d \rfloor \\
&= log_d(\{(2r+1)\}! \{2n\}!).
\end{align*}

\vspace{.1in}
Case 2:  Without loss of generality, let $d/2 \leq t < d$ and $s < d/2$.  Then $\lfloor r/d \rfloor = k$, $\lfloor n/d \rfloor = l$ and $\lfloor (n+r+1)/d \rfloor \leq k+l+2$.  In this case, $\lfloor (2r+1)/d \rfloor = 2k + 2$ and $\lfloor 2n/d \rfloor = 2l$ thus 
\begin{align*}
log_d(\{r\}! \{n\}! \{(2r+1)\}!) &= \lfloor r/d \rfloor + \lfloor n/d \rfloor + \lfloor (n+r+1)/d \rfloor  \\
&\leq k + l + (k+l+2) \\
&= 2k + 2l + 2 \\
&= \lfloor (2r+1)/d \rfloor + \lfloor 2n/d \rfloor \\
&=log_d(\{(2r+1)\}! \{2n\}!).
\end{align*}

\vspace{.1in}
Case 3:  Let $d/2 \leq t < d$ and $d/2 \leq s < d$.  Then $\lfloor r/d \rfloor = k$, $\lfloor n/d \rfloor = l$ and $\lfloor (n+r+1)/d \rfloor = k+l+2$.  In this case, $\lfloor (2r+1)/d \rfloor = 2k + 2$ and $\lfloor 2n/d \rfloor = 2l + 1$ thus 
\begin{align*}
log_d(\{r\}! \{n\}! \{(2r+1)\}!) &= \lfloor r/d \rfloor + \lfloor n/d \rfloor + \lfloor (n+r+1)/d \rfloor  \\
&= k + l + (k+l+2) \\
&= 2k + 2l + 2 < (2k+2) + (2l+1) \\
&= \lfloor (2r+1)/d \rfloor + \lfloor 2n/d \rfloor\\
&=log_d(\{(2r+1)\}! \{2n\}!).
\end{align*}

\end{proof}

Since we can recover the super FiboCatalan numbers and the generalized FiboCatalan numbers by setting $s=t=1$ in the Lucas analogues for these numbers, we have the following results for these numbers:

\begin{Corollary}
The super FiboCatalan numbers
\[
S(m,n)_F= \frac{F_{2m}!F_{2n}!}{F_m!F_n!F_{m+n}!}
\]

and the generalized FiboCatalan numbers
\[
J_{r,F} \frac{F_{2n}!}{F_n! F_{n+r+1}!}
\]
where $J_{r,F} = F_{2r+1}!/F_r!$ are positive integers.
\end{Corollary}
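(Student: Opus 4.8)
The plan is to deduce the Corollary directly from Theorems 2 and 3 by specializing the Lucas variables to $s=t=1$. First I would record that this specialization turns the defining recurrence $\{n\} = s\{n-1\} + t\{n-2\}$ into the Fibonacci recurrence $F_n = F_{n-1} + F_{n-2}$, and that the initial conditions $\{0\}=0$, $\{1\}=1$ match $F_0=0$, $F_1=1$. Hence $\{n\}$ evaluated at $s=t=1$ equals $F_n$ for every $n$, and by taking products this gives $\{n\}!$ evaluated at $s=t=1$ equal to $F_n!$, so that every lucanomial coefficient specializes to the corresponding fibonomial coefficient $\binom{n}{k}_F$.

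Next I would apply this specialization factor by factor to the two Lucas analogues. Because $S\{m,n\} = \frac{\{2m\}!\{2n\}!}{\{m\}!\{n\}!\{m+n\}!}$ becomes $\frac{F_{2m}!F_{2n}!}{F_m!F_n!F_{m+n}!} = S(m,n)_F$ at $s=t=1$, and likewise $J_{\{r\}} \frac{\{2n\}!}{\{n\}!\{n+r+1\}!}$ becomes $J_{r,F} \frac{F_{2n}!}{F_n!F_{n+r+1}!}$, the two numerical quantities appearing in the Corollary are precisely the values of the corresponding polynomials at $s=t=1$.

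The decisive step is then to invoke Theorems 2 and 3: each Lucas analogue is a polynomial in $\mathbb{N}[s,t]$, that is, it has non-negative integer coefficients. Evaluating any such polynomial at $s=t=1$ merely sums its coefficients, which yields a non-negative integer. This transfers the polynomial integrality established in Section 2 to numerical integrality, showing that both the super FiboCatalan numbers and the generalized FiboCatalan numbers are non-negative integers.

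Finally I would upgrade non-negativity to positivity. Each expression is a quotient of products of Fibonacci factorials whose arguments are strictly positive (for $m,n \geq 1$ and $r \geq 0$), so each is a strictly positive rational number; being also an integer, it must be a positive integer. I expect essentially no obstacle here, since the whole argument is a specialization of the stronger results already proved; the only points requiring any care are the observation that a polynomial with non-negative integer coefficients necessarily takes a non-negative integer value at $s=t=1$, and the separate, elementary positivity check that rules out the value zero.
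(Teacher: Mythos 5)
Your proposal is correct and follows exactly the paper's route: the paper deduces the Corollary from Theorems 2 and 3 by observing that setting $s=t=1$ in the Lucas analogues recovers the Fibonacci quantities. You simply spell out the details (the specialization $\{n\}\mapsto F_n$, evaluation of a polynomial in $\mathbb{N}[s,t]$ at $s=t=1$, and the positivity check) that the paper leaves implicit.
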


\section{An identity and an alternate proof}
 
Some of the special cases of the super FiboCatalan numbers and the generalized FiboCatalan numbers reveal interesting relationships.  For example, when $m=1$, the super FiboCatalan numbers reduce to the FiboCatalan numbers.
\[
S(1,n)_F = \frac{F_2! F_{(2n)}!}{F_1! F_n! F_{(n+1)}!} = \frac{1}{F_{n+1}} \frac{F_{2n}!}{F_n! F_n!} = C_{n,F}
\]

When $m=2$, we have
\[
S(2,n)_F = \frac{F_4! F_{2n}!}{F_2! F_n! F_{(n+2)}!} = \frac{6 F_{2n}!}{F_n! F_{(n+2)}!}.
\]

When $n=m$ we have:
\[
S(m,m)_F = \frac{F_{2m}! F_{2m}!}{F_m! F_m! F_{2m}!} = \binom{2m}{m}_F
\]
and when $n=m+1$ we have
\[
S(m, m+1)_F = \frac{F_{2m}! F_{2m+2}!}{F_m! F_{m+1}! F_{2m+1}!} = \frac{F_{2m+2} F_{2m}!}{F_{m+1}! F_m!} = F_{2m+2} C_{m, F}.
\]

The generalized FiboCatalan number for $r=0$ is equal to $S(1,n)_F$, which is equal to $C_{n,F}$:
\[
J_{0,F} \frac{F_{2n}!}{F_n! F_{n+0+1}!} = \frac{F_1!}{F_0!} \frac{F_{2n}!}{F_n! F_{n+1}!} = C_{n,F} = S(1,n)_F.
\]

The generalized FiboCatalan number for $r=1$ is:
\[
J_{1,F} \frac{F_{2n}!}{F_n! F_{n+1+1}!} = \frac{F_3!}{F_1!} \frac{F_{2n}!}{F_n! F_{n+2}!} = 2 \frac{F_{2n}!}{F_n! F_{n+2}!} = \frac{1}{3} S(2,n)_F.
\]

In this section, we will prove a new identity involving Fibonacci and FiboCatalan numbers and use it to provide an alternate proof that the generalized FiboCatalan numbers for $r=1$ are positive integers.

\begin{Lemma}
\[
F_{2n} F_{n+2} - F_{2n+2} F_n = (-1)^n F_n.
\]
\end{Lemma}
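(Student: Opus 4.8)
The plan is to collapse the two products on the left into a single d'Ocagne-type expression and then finish either by a standard identity or by a short induction. First I would apply the Fibonacci recurrence twice, writing $F_{n+2}=F_{n+1}+F_n$ and $F_{2n+2}=F_{2n+1}+F_{2n}$, and expand the left-hand side:
\[
F_{2n}F_{n+2}-F_{2n+2}F_n=F_{2n}(F_{n+1}+F_n)-(F_{2n+1}+F_{2n})F_n=F_{2n}F_{n+1}-F_{2n+1}F_n.
\]
The two copies of $F_{2n}F_n$ cancel, so the claim is equivalent to the cleaner statement $F_{2n}F_{n+1}-F_{2n+1}F_n=(-1)^nF_n$.

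Next I would recognize this reduced form as d'Ocagne's identity $F_mF_{k+1}-F_{m+1}F_k=(-1)^kF_{m-k}$ specialized to $m=2n$ and $k=n$, which yields $(-1)^nF_{2n-n}=(-1)^nF_n$ at once. If one prefers a self-contained argument that avoids quoting d'Ocagne, the same reduced identity follows directly from the Binet formula $F_k=(\phi^k-\psi^k)/\sqrt5$ with $\phi\psi=-1$ and $\phi-\psi=\sqrt5$: expanding $F_{2n}F_{n+1}-F_{2n+1}F_n$ and collecting terms factors out $(\phi\psi)^n=(-1)^n$ together with $\phi-\psi$, leaving exactly $F_n$, so the cross terms assemble into $(-1)^nF_n$.

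A third, fully elementary option is induction on $n$ applied to the reduced identity $G(n):=F_{2n}F_{n+1}-F_{2n+1}F_n=(-1)^nF_n$; here I would relate $G(n+1)$ to $G(n)$ by rewriting $F_{2n+2}$ and $F_{2n+3}$ in terms of $F_{2n}$ and $F_{2n+1}$ via the index-doubling relations, and the alternating sign $(-1)^n$ would emerge from the recurrence bookkeeping. In all three routes the actual content is light, so the main obstacle is purely clerical: keeping the signs straight—in particular tracking the factor $(\phi\psi)^n=(-1)^n$ in the Binet computation or the parity flip in the inductive step—and making sure the cancellation of the $F_{2n}F_n$ terms in the first step is carried out correctly. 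I would adopt the d'Ocagne reduction as the primary line, since it makes the appearance of $(-1)^nF_n$ most transparent while keeping the algebra minimal.
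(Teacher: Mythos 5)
Your proof is correct, but it takes a genuinely different route from the paper. The paper gives no computation at all: it cites the identity as well known, pointing to the tail-swapping (bijective tiling) arguments in Benjamin and Quinn's \emph{Proofs That Really Count} and, for an algebraic treatment, to Garrett's Theorem 1.2 with $q=1$. Your argument is algebraic and essentially self-contained: the opening cancellation $F_{2n}F_{n+2}-F_{2n+2}F_n=F_{2n}F_{n+1}-F_{2n+1}F_n$ is carried out correctly, and the reduced statement is precisely d'Ocagne's identity at $m=2n$, $k=n$; your Binet fallback also checks out, since the cross terms assemble into $(\phi-\psi)(\phi\psi)^n(\phi^n-\psi^n)$, which is $5(-1)^nF_n/\sqrt5\cdot(1/\sqrt5)$ after normalization, i.e. $(-1)^nF_n$. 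The comparison is worth noting because of what each approach buys. The paper later needs the Lucas analogue of this lemma, $\{2n\}\{n+2\}-\{2n+2\}\{n\}=(-1)^n\{2\}t^n\{n\}$, and justifies it by saying the same tail-swapping argument carries over; a weighted tiling proof transfers to the $(s,t)$-setting with no structural change. Your primary route would need the Lucas analogue of d'Ocagne to generalize similarly, but your Binet route does generalize cleanly: replacing $\phi,\psi$ by the roots of $x^2=sx+t$ gives $\phi\psi=-t$, which is exactly where the $t^n$ in the Lucas version comes from. In exchange for being less bijective, your reduction makes the mechanism producing the factor $(-1)^nF_n$ completely explicit, which the paper's citation-only proof does not.
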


\begin{proof}
This is a fairly well-known result for the Fibonacci numbers and the proof is a straightforward tail-swapping argument similar to those found in Section 1.2, Chapter 1 of Proofs That Really Count by Benjamin and Quinn \cite{BeQ}.  For a more algebraic argument, see Theorem 1.2 (with $q=1$) in a paper by Garrett \cite{Gar}.
\end{proof}

\begin{Theorem}
\begin{equation}\label{mainresult}
F_{2n+1} F_{2n} C_{n,F} - F_{n+1} F_n C_{n+1,F} = (-1)^n F_n F_{2n+1} \frac{F_{2n}!}{F_{n+2}!F_n!} = (-1)^n \binom{2n+1}{n+2}_F.
\end{equation}
\end{Theorem}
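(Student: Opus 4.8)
The plan is to clear all denominators on the left-hand side so that the difference collapses to a single ratio of Fibonacci factorials, at which point the preceding Lemma does all of the real work. First I would rewrite both FiboCatalan numbers in pure factorial form. Since $F_{n+1}! = F_{n+1} F_n!$ and $F_{n+2}! = F_{n+2} F_{n+1}!$, the definition $C_{n,F} = \frac{1}{F_{n+1}}\binom{2n}{n}_F$ gives
\[
C_{n,F} = \frac{F_{2n}!}{F_{n+1}!\,F_n!}, \qquad C_{n+1,F} = \frac{F_{2n+2}!}{F_{n+2}!\,F_{n+1}!}.
\]
These forms are chosen so that both terms can be placed over the common denominator $F_{n+2}!\,F_n!$, which is exactly the denominator appearing in the target middle expression.

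Next I would bring each term to that common denominator. For the first term, $F_{2n+1} F_{2n}\, C_{n,F}$, I multiply numerator and denominator by $F_{n+2}$, converting $F_{n+1}!$ into $F_{n+2}!$. For the second term, $F_{n+1} F_n\, C_{n+1,F}$, the explicit factor $F_{n+1}$ cancels the surplus $F_{n+1}$ in $F_{n+1}!$ (leaving $F_n!$), and I expand $F_{2n+2}! = F_{2n+2} F_{2n+1} F_{2n}!$. After these two reductions the difference becomes
\[
F_{2n+1} F_{2n}\, C_{n,F} - F_{n+1} F_n\, C_{n+1,F} = \frac{F_{2n+1}\,F_{2n}!}{F_{n+2}!\,F_n!}\bigl(F_{2n}F_{n+2} - F_n F_{2n+2}\bigr).
\]
The decisive move is factoring out $F_{2n+1}$ from both terms, since the remaining bracket is precisely the quantity evaluated in the previous Lemma. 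Applying that Lemma, $F_{2n}F_{n+2} - F_{2n+2}F_n = (-1)^n F_n$, replaces the bracket by $(-1)^n F_n$ and immediately yields the first claimed equality.

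The second equality is then just a repackaging into a fibonomial coefficient. Using $F_{2n+1}\,F_{2n}! = F_{2n+1}!$ to absorb $F_{2n+1}$ and $F_n/F_n! = 1/F_{n-1}!$ to absorb $F_n$, the middle expression becomes $(-1)^n \frac{F_{2n+1}!}{F_{n+2}!\,F_{n-1}!} = (-1)^n \binom{2n+1}{n+2}_F$, since $(2n+1)-(n+2) = n-1$. I do not expect a genuine conceptual obstacle here; the entire difficulty is bookkeeping, namely choosing the common denominator so that the leftover product is exactly the Lemma's expression. The one point requiring care is tracking the single-term factors $F_{2n+1}F_{2n}$ and $F_{n+1}F_n$ through the denominator conversions, as it is these that leave behind the clean factor $F_{2n+1}$ in front of the Lemma's binomial-type difference.
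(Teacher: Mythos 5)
Your proposal is correct and follows essentially the same route as the paper's proof: both rewrite $C_{n,F}$ and $C_{n+1,F}$ in factorial form, place the two terms over the common denominator $F_{n+2}!\,F_n!$, factor out $F_{2n+1}$, and invoke the Lemma $F_{2n}F_{n+2} - F_{2n+2}F_n = (-1)^n F_n$ to collapse the bracket. Your final repackaging of the middle expression as $(-1)^n\binom{2n+1}{n+2}_F$ is a small bonus, since the paper states that equality in the theorem but leaves its (routine) verification implicit.
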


\begin{proof}
\begin{align*}
F_{2n+1} F_{2n} C_{n,F} - F_{n+1} F_n C_{n+1,F} &= \frac{F_{2n+1} F_{2n} F_{2n}!}{F_{n+1} F_n! F_n!} - \frac{F_{n+1} F_n F_{2n+2}!}{F_{n+2} F_{n+1}! F_{n+1}!}\\
&=F_{2n+1} F_{2n} F_{n+2} \frac{F_{2n}!}{F_{n+2}! F_n!} - F_{2n+2} F_{2n+1} F_n \frac{F_{2n}!}{F_{n+2}! F_n!}\\
&=F_{2n+1} [F_{2n} F_{n+2} - F_{2n+2} F_n ] \frac{F_{2n}!}{F_{n+2}! F_n!}\\
&=F_{2n+1} (-1)^n F_n \frac{F_{2n}!}{F_{n+2}! F_n!}
\end{align*}
\end{proof}

\begin{Corollary}
For $n \geq 1$, 
\[
F_{2n+1} \frac{F_{2n}!}{F_{n+2}!F_n!} = \frac{1}{F_{n+2}} \binom{2n+1}{n}_F
\]
 is an integer.
\end{Corollary}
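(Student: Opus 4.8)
The plan is to establish the two claims of the corollary separately: first the algebraic identity
\[
F_{2n+1} \frac{F_{2n}!}{F_{n+2}!F_n!} = \frac{1}{F_{n+2}} \binom{2n+1}{n}_F,
\]
and then the integrality of this common quantity, which I will denote $X$. The identity is pure bookkeeping with the factorials: writing $\binom{2n+1}{n}_F = F_{2n+1}!/(F_n! F_{n+1}!)$ and using $F_{n+2}! = F_{n+2} F_{n+1}!$ on the right, together with $F_{2n+1}! = F_{2n+1} F_{2n}!$ on the left, both sides collapse to the common value $F_{2n+1}!/(F_n! F_{n+2}!)$. So the real content of the corollary is showing that $X \in \mathbb{Z}$.

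For integrality I would lean on the main identity \eqref{mainresult}, which already supplies
\[
F_{2n+1} F_{2n} C_{n,F} - F_{n+1} F_n C_{n+1,F} = (-1)^n F_n X.
\]
The left-hand side is manifestly an integer, since the FiboCatalan numbers $C_{n,F}$ and $C_{n+1,F}$ are integers and the Fibonacci numbers are integers; this shows $F_n X \in \mathbb{Z}$, but not yet that $X$ itself is an integer. To close this gap, the key step is to pull a factor of $F_n$ out of the \emph{entire} left-hand side. For this I would use the standard Fibonacci divisibility $F_n \mid F_{2n}$ (indeed $F_{2n} = F_n L_n$ with $L_n$ the $n$th Lucas number), which lets me rewrite the left-hand side as
\[
F_n \left( F_{2n+1} L_n C_{n,F} - F_{n+1} C_{n+1,F} \right) = (-1)^n F_n X.
\]

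Cancelling the common nonzero factor $F_n$ (valid for $n \geq 1$) then yields
\[
(-1)^n X = F_{2n+1} L_n C_{n,F} - F_{n+1} C_{n+1,F},
\]
whose right-hand side is a difference of products of integers, hence an integer; therefore $X$ is an integer, as claimed. I expect the main obstacle to be precisely this last maneuver: identity \eqref{mainresult} only hands us $F_n X \in \mathbb{Z}$, and naively one cannot divide an integer by $F_n$ and remain integral. The divisibility $F_n \mid F_{2n}$ is exactly what makes the term $F_{2n+1}(F_{2n}/F_n)C_{n,F}$ integral after the cancellation, and recognizing that this single well-known fact rescues the argument is the crux of the proof.
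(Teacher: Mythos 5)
Your proposal is correct and takes essentially the same route as the paper: the paper also divides Equation~(\ref{mainresult}) by $F_n$ after factoring the left side using $F_{2n} = F_n F_{n+1} + F_n F_{n-1}$, which is exactly your $F_{2n} = F_n L_n$ with the Lucas number expanded as $L_n = F_{n+1} + F_{n-1}$. The only difference is cosmetic: you explicitly verify the factorial identity equating the two expressions, which the paper leaves implicit.
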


\begin{proof}
It is well know that $F_{2n} = F_n F_{n+1} + F_n F_{n-1}$, thus the left side of Equation (\ref{mainresult}) from Theorem 1 is equal to
\[
F_{2n+1} [F_n F_{n+1} + F_n F_{n-1}] C_{n,F} - F_{n+1} F_n C_{n+1,F}
\]
and is therefore divisible by $F_n$.  Using this expression as the left side and dividing both sides of Equation (\ref{mainresult}) by $F_n$ gives
\begin{align*}
F_{2n+1} F_{n+1} C_{n,F} + F_{2n+1} F_{n-1} C_{n,F} - F_{n+1} C_{n+1, F} &= (-1)^n F_{2n+1} \frac{F_{2n}!}{F_{n+2}! F_n!} \\
&= (-1)^n \frac{1}{F_{n+2}} \binom{2n+1}{n}_F.
\end{align*}

Since the left side of this equation is clearly an integer, we have the result.
\end{proof}

Note that the usual binomial expression
\[
\frac{1}{n+2} \binom{2n+1}{n}
\]
is not always an integer since this number is a fraction when $n=2$, for example.

We can also rewrite the expression on the right side of Equation (\ref{mainresult}) as:
\[
(-1)^n F_{2n+1} \frac{F_{2n}!}{F_{n+2}!F_n!} = (-1)^n F_{2n+1} \frac{1}{F_{n+2}} C_{n,F}.
\]

A well-known fact of the Fibonacci numbers is that $gcd(F_n, F_m) = F_{gcd(m,n)}$.  Thus $gcd(F_{2n+1}, F_{n+2}) = F_{gcd(2n+1, n+2)}$.  The $gcd(2n+1, n+2) = 1$ or $3$.  If $gcd(2n+1, n+2)=1$, then $gcd(F_{2n+1}, F_{n+2}) = F_1 = 1$ and so $F_{n+2}$ divides $C_{n, F}$.  If $gcd(2n+1, n+2)=3$, then $gcd(F_{2n+1}, F_{n+2}) = F_3 = 2$ and so $F_{n+2}$ divides $2 C_{n, F}$.

\begin{Corollary}  For $n\geq 1$, the generalized FiboCatalan numbers for $r=1$,
\[
\frac{2 F_{2n}!}{F_{n+2}! F_n!} = \frac{1}{F_{n+2}} 2 C_{n, F}
\]
are positive integers.  
\end{Corollary}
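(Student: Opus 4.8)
The plan is to combine Corollary 3 with the greatest-common-divisor analysis that immediately precedes the statement, so that the only real content is a short case split on $\gcd(2n+1,n+2)$. Set $X = \frac{F_{2n}!}{F_{n+2}!\,F_n!} = \frac{1}{F_{n+2}}C_{n,F}$, which is a priori only a rational number. From Corollary 3 we already know that $F_{2n+1}\,X = \frac{F_{2n+1}}{F_{n+2}}C_{n,F}$ is an integer, and positivity of $X$ is immediate since every Fibonacci number appearing is positive. It therefore suffices to show that $2X$ is an integer.

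First I would invoke the identity $\gcd(F_a,F_b)=F_{\gcd(a,b)}$ with $a=2n+1$ and $b=n+2$, and compute $\gcd(2n+1,n+2)$ by the Euclidean algorithm: since $2n+1 = 2(n+2)-3$, one gets $\gcd(2n+1,n+2)=\gcd(n+2,3)$, which is either $1$ or $3$.

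In the case $\gcd(2n+1,n+2)=1$, we have $\gcd(F_{2n+1},F_{n+2})=F_1=1$; since $F_{n+2}$ divides the integer $F_{2n+1}C_{n,F}$ and is coprime to $F_{2n+1}$, it must divide $C_{n,F}$, so $X$ itself is a positive integer and hence so is $2X$. In the case $\gcd(2n+1,n+2)=3$, we have $\gcd(F_{2n+1},F_{n+2})=F_3=2$; writing $F_{2n+1}=2a$ and $F_{n+2}=2b$ with $\gcd(a,b)=1$, the integrality of $F_{2n+1}X = aC_{n,F}/b$ together with $\gcd(a,b)=1$ forces $b\mid C_{n,F}$, whence $F_{n+2}=2b$ divides $2C_{n,F}$ and $2X=\frac{2C_{n,F}}{F_{n+2}}$ is an integer. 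Combining the two cases yields the claim.

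The argument is essentially complete once the two number-theoretic facts ($\gcd(F_a,F_b)=F_{\gcd(a,b)}$ and Corollary 3) are in hand, so I do not anticipate a serious obstacle; the one point requiring care is the second case, where one must reduce the fraction $F_{2n+1}/F_{n+2}$ by its common factor $2$ \emph{before} applying coprimality, rather than trying to conclude $F_{n+2}\mid C_{n,F}$ directly. Indeed, that stronger divisibility fails precisely when $n\equiv 1\pmod 3$ (equivalently $3\mid n+2$), which is exactly the regime forcing $\gcd(2n+1,n+2)=3$; this is also why the factor of $2$ in the statement cannot be dropped in general.
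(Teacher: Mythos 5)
Your proof is correct and follows essentially the same route as the paper: the paper likewise combines the preceding corollary (integrality of $F_{2n+1}\frac{F_{2n}!}{F_{n+2}!\,F_n!}$) with the fact $\gcd(F_a,F_b)=F_{\gcd(a,b)}$ and the case split $\gcd(2n+1,n+2)\in\{1,3\}$; your case-$2$ step, cancelling the common factor $2$ before invoking coprimality, merely makes explicit what the paper asserts without detail. One caveat on your closing aside: the claim that the stronger divisibility $F_{n+2}\mid C_{n,F}$ fails \emph{precisely} when $n\equiv 1\pmod 3$ is an overstatement -- it can fail \emph{only} in that case (and does fail for $n=1$ and $n=4$, so the factor of $2$ genuinely cannot be dropped), but it does not always fail there: for $n=7$ one computes $C_{7,F}=2\cdot 3\cdot 11\cdot 17\cdot 29\cdot 89\cdot 233$, which is divisible by $F_9=34$. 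Since that remark plays no role in the argument, the proof itself stands as written.
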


\section{The Lucas analogue of the generalized FiboCatalan numbers}

We have similar relationships and results for the Lucas analogues of the super FiboCatalan numbers and the generalized FiboCatalan numbers.  When $m=1$, the Lucas analogue of the super FiboCatalan numbers reduce to $\{2\}$ times the Lucas analogue of the FiboCatalan numbers.  
\[
S\{1,n\} = \frac{\{2\}! {\{2n\}}!}{\{1\}! \{n\}! \{n+1\}!} = \frac{\{2\}}{\{n+1\}} \frac{\{2n\}!}{\{n\}! \{n\}!} = \{2\} C_{\{n\}}.
\]

When $m=2$, we have
\[
S\{2,n\} = \frac{\{4\}! \{2n\}!}{\{2\}! \{n\}! \{n+2\}!} = \{4\} \{3\} \frac{\{2n\}!}{\{n\}! \{n+2\}!}.
\]
The Lucas analogue of the generalized FiboCatalan number for $r=0$ is equal to $C_{\{n\}}$ which is equal to $\frac{1}{\{2\}} S\{1,n\}$:
\[
J_{\{0\}} \frac{\{2n\}!}{\{n\}! \{n+0+1\}!} = \frac{\{1\}!}{\{0\}!} \frac{\{2n\}!}{\{n\}! \{n+1\}!} = C_{\{n\}} = \frac{1}{\{2\}} \ S\{1,n\}.
\]

When $m=n$ we have:
\[
S\{m,m\} = \frac{\{2m\}! \{2m\}!}{\{m\}! \{m\}! \{2m\}!} = \Bigl\{ \begin{array}{c} 2m\\m \end{array} \Bigr\}
\]
and when $n=m+1$ we have 
\[
S\{m, m+1\} = \frac{\{2m\}! \{2m+2\}!}{\{m\}! \{m+1\}! \{2m+1\}!} = \frac{\{2m+2\} \{2m\}!}{\{m+1\}! \}m\}!} = \{2m+2\} C_{\{m\}}.
\]

The Lucas analogue of the generalized FiboCatalan number for $r=1$ is:
\[
J_{\{1\}} \frac{\{2n\}!}{\{n\}! \{n+1+1\}!} = \frac{\{3\}!}{\{1\}!} \frac{\{2n\}!}{\{n\}! \{n+2\}!} = \{3\}! \frac{\{2n\}!}{\{n\}! \{n+2\}!} = \frac{\{2\}}{\{4\}} S(2,n)_F.
\]

\begin{Lemma}
\[
\{2n\} \{n+2\} - \{2n+2\} \{n\} = (-1)^n \{ 2 \} t^n\{n\}.
\]
\end{Lemma}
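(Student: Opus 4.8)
The plan is to use the Binet-type closed form for the Lucas polynomials. Let $X$ and $Y$ be the two roots of $z^2 - s z - t = 0$, so that $X + Y = s$, $XY = -t$, and
\[
\{ n \} = \frac{X^n - Y^n}{X - Y}
\]
for all $n \geq 0$. Although $X$ and $Y$ individually involve $\sqrt{s^2 + 4t}$, every symmetric expression in them lies in $\mathbb{Z}[s,t]$, so any identity derived this way is a genuine polynomial identity; equivalently, one may treat $X$ and $Y$ as formal symbols subject only to $X+Y=s$ and $XY=-t$.

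First I would substitute this form into the left-hand side and clear the common denominator $(X-Y)^2$, giving
\[
\{2n\}\{n+2\} - \{2n+2\}\{n\} = \frac{(X^{2n}-Y^{2n})(X^{n+2}-Y^{n+2}) - (X^{2n+2}-Y^{2n+2})(X^n-Y^n)}{(X-Y)^2}.
\]
Expanding both products in the numerator, the extreme terms $X^{3n+2}$ and $Y^{3n+2}$ cancel, leaving only the four mixed terms $X^{2n+2}Y^n + X^n Y^{2n+2} - X^{2n}Y^{n+2} - X^{n+2}Y^{2n}$.

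Next I would factor this numerator. Pairing the terms appropriately yields
\[
X^{2n}Y^n(X^2-Y^2) - X^n Y^{2n}(X^2-Y^2) = (X^2-Y^2)\,(XY)^n\,(X^n - Y^n).
\]
Finally I would translate back into $s$ and $t$: using $X^2 - Y^2 = (X-Y)(X+Y) = s(X-Y)$, the relation $(XY)^n = (-t)^n = (-1)^n t^n$, the identity $\{2\} = s\{1\} + t\{0\} = s$, and $X^n - Y^n = (X-Y)\{n\}$, the numerator becomes $s (-1)^n t^n (X-Y)^2 \{n\}$, so dividing by $(X-Y)^2$ produces $(-1)^n \{2\} t^n \{n\}$, as claimed.

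The only delicate points are bookkeeping rather than conceptual: keeping the four surviving cross terms correctly signed during the factorization, and recognizing that the sign $(-1)^n$ is forced by $XY = -t$ while the factor $\{2\}$ is simply $s$. As a consistency check, setting $s = t = 1$ gives $\{2\} = 1$ and $t^n = 1$, recovering the earlier Fibonacci identity $F_{2n}F_{n+2} - F_{2n+2}F_n = (-1)^n F_n$. A combinatorial alternative, using the weighted square-and-domino tilings in which squares carry weight $s$ and dominoes weight $t$ (so that $\{n\}$ is the total weight of tilings of a length $n-1$ strip), would mirror the tail-swapping argument cited for the previous Lemma, with the term $(-1)^n \{2\} t^n \{n\}$ arising as the weighted discrepancy left uncanceled by the sign-reversing involution; however, the Binet computation above is the most direct route.
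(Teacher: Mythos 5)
Your proof is correct, but it takes a genuinely different route from the paper. The paper disposes of this Lemma in one sentence, asserting that the tail-swapping argument for the Fibonacci case (its Lemma on $F_{2n}F_{n+2} - F_{2n+2}F_n = (-1)^n F_n$, which cites Benjamin and Quinn) carries over to the weighted setting; it is a combinatorial proof by reference. You instead give a self-contained algebraic proof via the Binet form $\{n\} = (X^n - Y^n)/(X - Y)$ with $X + Y = s$, $XY = -t$, and your computation checks out: the extreme terms $X^{3n+2}$, $Y^{3n+2}$ do cancel, the four surviving cross terms factor as $(X^2 - Y^2)(XY)^n(X^n - Y^n)$, and the substitutions $X^2 - Y^2 = s(X-Y)$, $(XY)^n = (-1)^n t^n$, $\{2\} = s$ yield exactly $(-1)^n \{2\} t^n \{n\}$ after dividing by $(X-Y)^2$. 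Your remark that every expression involved is symmetric in $X$ and $Y$, so the identity lives in $\mathbb{Z}[s,t]$, correctly handles the only foundational subtlety. What your route buys: it is fully explicit and verifiable line by line, it makes the provenance of each factor on the right transparent (the sign and $t^n$ come from $(XY)^n$, the $\{2\}$ from $X+Y$), and it is essentially the algebraic method of Garrett that the paper itself cites as an alternative in the Fibonacci case, so it generalizes mechanically to related identities. What the paper's route buys: a bijective explanation in the weighted square-and-domino tiling model, consistent with the combinatorial spirit of the paper, in which the right-hand side is the weight of the configurations left uncancelled by the tail-swapping involution --- precisely the alternative you sketch, correctly, in your closing remark.
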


\begin{proof}  This proof follows the same tail-swapping argument as the argument for the FiboCatalan case.
\end{proof}

\begin{Theorem}
\begin{align*}
\label{mainresult2}
\{2n+1\} \{2n\} C_{\{n\}} - \{n+1\} \{n\} C_{\{n+1\}} &= (-1)^n t^n \{2\} \{n\} \{2n+1\} \frac{\{2n\}!}{\{n+2\}!\{n\}!} \\
 &= (-1)^n t^n \{ s \} \Bigl\{ \begin{array}{c} 2n+1\\n+2 \end{array} \Bigr\}.
\end{align*}
\end{Theorem}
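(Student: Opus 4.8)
The plan is to follow exactly the template of the Fibonacci computation in the earlier Theorem of Section~3, replacing each Fibonacci quantity by its Lucas polynomial analogue and invoking the preceding Lemma in place of the identity $F_{2n} F_{n+2} - F_{2n+2} F_n = (-1)^n F_n$. First I would expand the two Lucas--Catalan numbers through their defining lucanomial formulas, $C_{\{n\}} = \frac{1}{\{n+1\}} \frac{\{2n\}!}{\{n\}! \{n\}!}$ and $C_{\{n+1\}} = \frac{1}{\{n+2\}} \frac{\{2n+2\}!}{\{n+1\}! \{n+1\}!}$, so that the left-hand side becomes a difference of two explicit ratios of Lucas factorials.

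The core computation is to bring both terms over the common denominator $\{n+2\}! \{n\}!$. For the first term I would use $\{n+1\} \{n\}! = \{n+1\}!$ and then multiply numerator and denominator by $\{n+2\}$ to produce $\{2n+1\} \{2n\} \{n+2\} \frac{\{2n\}!}{\{n+2\}!\{n\}!}$. For the second term I would peel off $\{2n+2\}! = \{2n+2\} \{2n+1\} \{2n\}!$ and cancel one factor of $\{n+1\}$ against $\{n+1\}! = \{n+1\}\{n\}!$, arriving at $\{2n+2\} \{2n+1\} \{n\} \frac{\{2n\}!}{\{n+2\}!\{n\}!}$. Subtracting, the common factor $\{2n+1\} \frac{\{2n\}!}{\{n+2\}!\{n\}!}$ pulls out and leaves the bracket $[\{2n\}\{n+2\} - \{2n+2\}\{n\}]$.

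At this point I would apply the preceding Lemma, which evaluates that bracket to $(-1)^n \{2\} t^n \{n\}$; this is the only place the Lucas structure genuinely enters, and it is exactly where the extra factor $\{2\} t^n$ appears compared with the bare $(-1)^n F_n$ of the Fibonacci version. That gives the first displayed equality at once. For the second equality I would regroup the factorials: absorbing $\{2n+1\}\{2n\}! = \{2n+1\}!$ and using $\{n\}/\{n\}! = 1/\{n-1\}!$ turns $\{n\}\{2n+1\}\frac{\{2n\}!}{\{n+2\}!\{n\}!}$ into $\frac{\{2n+1\}!}{\{n+2\}!\{n-1\}!} = \Bigl\{ \begin{array}{c} 2n+1\\n+2 \end{array} \Bigr\}$, the lucanomial, leaving the prefactor $(-1)^n t^n \{2\}$. (Since $\{2\} = s$, the symbol $\{s\}$ in the statement should read $\{2\}$.)

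I do not expect any real obstacle: the argument is purely formal once the Lemma is in hand, and the only thing demanding care is the factorial bookkeeping — specifically keeping straight which of $\{n+1\}!$, $\{n+2\}!$, $\{2n\}!$, and $\{2n+2\}!$ absorbs which single Lucas factor, so that both terms land on the identical denominator $\{n+2\}!\{n\}!$ before the Lemma is invoked.
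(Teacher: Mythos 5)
Your proposal is correct and follows essentially the same route as the paper's proof: expand $C_{\{n\}}$ and $C_{\{n+1\}}$ via their lucanomial definitions, put both terms over the common denominator $\{n+2\}!\{n\}!$, factor out $\{2n+1\}\frac{\{2n\}!}{\{n+2\}!\{n\}!}$, and apply the preceding Lemma to the bracket $[\{2n\}\{n+2\}-\{2n+2\}\{n\}]$. You in fact go slightly further than the paper, which stops at the first displayed equality and leaves the conversion to the lucanomial $\bigl\{ \begin{array}{c} 2n+1\\ n+2 \end{array} \bigr\}$ implicit; your factorial bookkeeping for that step is right, as is your observation that the symbol $\{s\}$ in the statement should read $\{2\}$ (equivalently $s$).
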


\begin{proof}
\begin{align*}
\{2n+1\} \{2n\} C_{\{n\}} &- \{n+1\} \{n\} C_{\{n+1\}} \\
&= \frac{\{2n+1\} \{2n\} \{2n\}!}{\{n+1\} \{n\}! \{n\}!} - \frac{\{n+1\} \{n\} \{2n+2\}!}{\{n+2\} \{n+1\}! \{n+1\}!}\\
&=\{2n+1\} \{2n\} \{n+2\} \frac{\{2n\}!}{\{n+2\}! \{n\}!} - \{2n+2\} \{2n+1\} \{n\} \frac{\{2n\}!}{\{n+2\}! \{n\}!}\\
&=\{2n+1\} [\{2n\} \{n+2\} - \{2n+2\} \{n\} ] \frac{\{2n\}!}{\{n+2\}! \{n\}!}\\
&=\{2n+1\} (-1)^n st^n \{n\} \frac{\{2n\}!}{\{n+2\}! \{n\}!}\\
&= (-1)^n t^n \{2n+1\} \{2\} \{n\} \frac{\{2n\}!}{\{n+2\}! \{n\}!}
\end{align*}
\end{proof}

\begin{Corollary}
For $n \geq 1$, 
\[
\{2n+1\} \{2\} \frac{\{2n\}!}{\{n+2\}!\{n\}!} = \{2\} \frac{1}{\{n+2\}} \Bigl\{ \begin{array}{c} 2n+1\\n \end{array} \Bigr\}
\]
 is a polynomial with non-negative integer coefficients.
\end{Corollary}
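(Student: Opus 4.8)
The plan is to mimic the proof of Corollary 2, with the preceding Theorem (the Lucas analogue of Equation~(\ref{mainresult})) playing the role of that equation and the Lucas-polynomial doubling identity
\[
\{2n\} = \{n\}\bigl(\{n+1\} + t\{n-1\}\bigr)
\]
playing the role of $F_{2n} = F_n(F_{n+1} + F_{n-1}) = F_n L_n$. This identity is the Lucas analogue of the Fibonacci doubling formula and follows by a short induction on the defining recurrence (or by the same tail-swapping argument used in Lemma 3). Writing $M_n = \{n+1\} + t\{n-1\}$, it shows that $\{n\}$ divides the first term on the left-hand side of the Theorem; since $\{n\}$ obviously divides the second term, the whole left-hand side is divisible by $\{n\}$ in $\mathbb{Z}[s,t]$.

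First I would divide both sides of the Theorem's identity by $\{n\}$, which gives
\[
\{2n+1\} M_n C_{\{n\}} - \{n+1\} C_{\{n+1\}} = (-1)^n t^n \{2\} \{2n+1\} \frac{\{2n\}!}{\{n+2\}!\{n\}!}.
\]
Because $C_{\{n\}}$ and $C_{\{n+1\}}$ are polynomials (the Lucas analogue of the Catalan numbers), the left-hand side lies in $\mathbb{Z}[s,t]$, so the right-hand side is a polynomial; that is, $t^n X$ is a polynomial, where $X = \{2\}\{2n+1\}\frac{\{2n\}!}{\{n+2\}!\{n\}!}$ is the target expression.

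Two points then need care, and this is where I expect the real work to lie. First, to pass from ``$t^n X$ is a polynomial'' to ``$X$ is a polynomial,'' I would write $X = N/D$ with $N = \{2\}\{2n+1\}\{2n\}!$ and $D = \{n+2\}!\{n\}!$, and observe that every Lucas polynomial with $k \geq 1$ satisfies $\{k\}|_{t=0} = s^{k-1} \neq 0$, so $t$ divides none of them; hence $\gcd(D, t^n) = 1$ in the UFD $\mathbb{Z}[s,t]$, and from $D \mid t^n N$ one deduces $D \mid N$, so $X \in \mathbb{Z}[s,t]$. Second, the displayed left-hand side is a difference of two non-negative-coefficient polynomials, so it does not exhibit non-negativity on its face; to conclude, I would invoke Theorem 1 of Sagan and Tirrell. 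Since $X = f/g$ with $f = \{2\}\{2n+1\}\{2n\}!$ and $g = \{n+2\}!\{n\}!$ both products of Lucas polynomials, and since $X$ has now been shown to be a polynomial, the ``furthermore'' clause of that theorem guarantees that its coefficients are non-negative integers.

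For a cleaner, self-contained argument I would instead apply Theorem 1 directly to $X = f/g$, which settles polynomiality and non-negativity simultaneously. By Lemma 1, $log_d g = \lfloor (n+2)/d\rfloor + \lfloor n/d\rfloor$, while $log_d f = [d = 2] + [d \mid 2n+1] + \lfloor 2n/d\rfloor$, the first two terms recording the atoms of the single Lucas polynomials $\{2\}$ and $\{2n+1\}$ (here $[\,\cdot\,]$ is $1$ if the statement holds and $0$ otherwise). Writing $n = qd + \rho$ with $0 \leq \rho < d$, the inequality $log_d f \geq log_d g$ collapses to
\[
[d = 2] + [d \mid 2n+1] + \lfloor 2\rho/d\rfloor \geq \lfloor (\rho+2)/d\rfloor,
\]
and verifying this for all $d \geq 2$ is the main obstacle. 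The right-hand side is $1$ only when $\rho \geq d-2$; for $d \geq 4$ this forces $\rho \geq d/2$, so $\lfloor 2\rho/d\rfloor = 1$, while the two remaining cases are $d = 2$ (covered by the term $[d=2]$) and $d = 3$, $\rho = 1$ (where $n \equiv 1 \pmod 3$ forces $3 \mid 2n+1$). Once this short case analysis is complete, Theorem 1 delivers the Corollary.
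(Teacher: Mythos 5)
Your proposal is correct, and it really contains two proofs. The first is essentially the paper's own argument: the same identity $\{2n\} = \{n\}\{n+1\} + t\{n\}\{n-1\}$, the same division of the preceding Theorem's identity by $\{n\}$, and the same cancellation of $t^n$ against the denominator. Where the paper justifies that cancellation by saying $\{n+2\}$ is a product of irreducible Lucas atoms and no $t^k$ is an atom, you note instead that $\{k\}\vert_{t=0} = s^{k-1} \neq 0$, so that $\gcd(\{n+2\}!\{n\}!,\, t^n) = 1$ in the UFD $\mathbb{Z}[s,t]$ --- an interchangeable justification --- and your explicit appeal to the ``furthermore'' clause of Sagan--Tirrell's Theorem 1 makes precise a non-negativity step that the paper states rather tersely. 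Your second, preferred argument is genuinely different from the paper's proof of this Corollary: you apply Sagan--Tirrell directly to $f = \{2\}\{2n+1\}\{2n\}!$ and $g = \{n+2\}!\{n\}!$, reduce $\log_d f \geq \log_d g$ to
\[
[d=2] + [d \mid 2n+1] + \lfloor 2\rho/d \rfloor \geq \lfloor (\rho+2)/d \rfloor, \qquad n = qd + \rho, \quad 0 \leq \rho < d,
\]
and settle the cases: $d = 2$ via the term $[d=2]$; $d = 3$, $\rho = 1$ via $3 \mid 2n+1$; and $\rho \geq d/2$, which gives $\lfloor 2\rho/d \rfloor = 1$ and covers not only all $d \geq 4$ but also $d = 3$, $\rho = 2$ (a case your list technically omits, though the same observation handles it). I checked this analysis; it is complete and correct. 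This direct route is self-contained --- it needs neither the new identity of the preceding Theorem nor the polynomiality of $C_{\{n\}}$ --- and it is exactly the technique of the paper's Section 2 (Theorems 2 and 3). What it gives up is the point of this section of the paper: the paper's proof is deliberately an alternate, identity-driven derivation showcasing the Lucas analogue of the Fibonacci identity, whereas your direct argument, though cleaner and shorter, re-proves the Corollary by the earlier floor-function method rather than as a consequence of that identity.
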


\begin{proof}
It is well know that $\{2n\} = \{n\} \{n+1\} + t \{n\} \{n-1\}$, thus the left side of Equation (\ref{mainresult2}) from Theorem 1 is equal to
\[
\{2n+1\} [\{n\} \{n+1\} + t \{n\} \{n-1\}] C_{\{n\}} - \{n+1\} \{n\} C_{\{n+1\}}
\]
and is therefore divisible by $\{n\}$.  Using this expression as the left side and dividing both sides of Equation (\ref{mainresult2}) by $\{n\}$ gives
\begin{align*}
\{2n+1\} \{n+1\} C_{\{n\}} + t \{2n+1\} \{n-1\} C_{\{n\}} &- \{n+1\} C_{\{n+1\}} \\
&= (-1)^n t^n \{2n+1\} \{2\} \frac{\{2n\}!}{\{n+2\}! \{n\}!} \\
&= (-1)^n t^n \{2\} \frac{1}{\{n+2\}} \Bigl\{ \begin{array}{c} 2n+1\\n \end{array} \Bigr\}.
\end{align*}

Since the left side of this equation is a polynomial with integer coefficients, we have that
\[
(-1)^n t^n \{2\} \frac{1}{\{n+2\}} \Bigl\{ \begin{array}{c} 2n+1\\n \end{array} \Bigr\}
\]
is a polynomial with integer coefficients and thus
\[
t^n \{2\} \frac{1}{\{n+2\}} \Bigl\{ \begin{array}{c} 2n+1\\n \end{array} \Bigr\}
\]
is a polynomial with non-negative integer coefficients.  The Lucanomial 
\[
\Bigl\{ \begin{array}{c} 2n+1\\n \end{array} \Bigr\}
\]
is a polynomial with non-negative integer coefficients.  Using the facts that $\{ n+2 \}$ can be written as a product of irreducible Lucas atoms \cite{SaT} and that $t^k$ is not a Lucas atom for any $k \geq 1$, we have that 
\[
\{2\} \frac{1}{\{n+2\}} \Bigl\{ \begin{array}{c} 2n+1\\n \end{array} \Bigr\} = \{2n+1\} \frac{1}{\{n+2\}} \{2\} C_{\{n\}}.
\]
is a polynomial with non-negative integer coefficients.
\end{proof}

Writing $\{2n+1\}$ and $\{n+2\}$ in terms of Lucas atoms we have
\[
\{2n+1\} = \prod_{d \vert 2n+1} P_d(s,t) \text{ and } \{n+2\} = \prod_{d \vert n+2} P_d(s,t).
\]

We know $gcd(2n+1, n+2) = 1$ or $3$.  If $gcd(2n+1, n+2) = 1$, then $\{n+2\}$ divides $\{2\} C_{\{n\}}$.  If $gcd(2n+1, n+2)=3$ then $\{n+2\}$ divides $\{3\} \{2\} C_{\{n\}}$, thus
\[
\{3\}! \frac{\{2n\}!}{\{n+2\}! \{n\}!}
\]
is a polynomial with non-negative integer coefficients (i.e. the generalized FiboCatalan number for $r=1$ is a polynomial with non-negative integer coefficients).

\section{$k$-divisible Lucanomials}

Given an integer $k \geq 1$, let 
\[
\{n : k \}! = \{k \} \{ 2k \} \cdots \{ nk \}.
\]
The {\it {$k$-divisible Lucanomial}} is defined as
\[
\left\{ \begin{array}{ccc}  n&:&k \\ m&:&k \end{array} \right\} = \frac{ \{ n:k \}!}{ \{ m:k \}! \{ n-m:k \}!}.
\]
The natural analogue of the super Catalan numbers for the $k$-divisible Lucanomials is then
\[
S\{ m, n : k \} = \frac{ \{ 2m:k \}! \{ 2n:k \} !}{ \{ m:k \}! \{ n:k \}! \{ n+m:k \}!}.
\]

\begin{Theorem}
The $k$-divisible Lucanomial analogues of the super Catalan numbers are polynomials with non-negative integer coefficients.
\end{Theorem}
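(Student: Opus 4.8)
The plan is to run the Sagan--Tirrell criterion (Theorem 1) exactly as in the proof of Theorem 2, after first working out the atomic exponents of the $k$-divisible factorials. Since $\{n:k\}! = \prod_{i=1}^{n}\{ik\}$ is a product of Lucas polynomials, both the numerator $\{2m:k\}!\{2n:k\}!$ and the denominator $\{m:k\}!\{n:k\}!\{n+m:k\}!$ of $S\{m,n:k\}$ are products of Lucas polynomials, so Theorem 1 reduces the claim to verifying, for every $d \geq 2$,
\[
log_d(\{2m:k\}!\,\{2n:k\}!) \geq log_d(\{m:k\}!\,\{n:k\}!\,\{n+m:k\}!).
\]

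First I would compute $log_d\{n:k\}!$. Because $\{N\} = \prod_{e \mid N} P_e(s,t)$, we have $log_d\{N\} = 1$ when $d \mid N$ and $0$ otherwise, so
\[
log_d\{n:k\}! = \sum_{i=1}^{n} log_d\{ik\} = \#\{\, 1 \le i \le n : d \mid ik \,\}.
\]
Writing $g = \gcd(d,k)$ and $d' = d/g$, we have $\gcd(d',k/g) = 1$, so $d \mid ik$ if and only if $d' \mid i$. Hence $log_d\{n:k\}! = \lfloor n/d' \rfloor$, a single floor with the reduced modulus $d'$. Substituting, the inequality to be shown becomes
\[
\lfloor 2m/d' \rfloor + \lfloor 2n/d' \rfloor \ \geq\ \lfloor m/d' \rfloor + \lfloor n/d' \rfloor + \lfloor (m+n)/d' \rfloor.
\]

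This is precisely the inequality already established in the proof of Theorem 2, now with $d'$ in place of $d$. When $d' = 1$ both sides equal $2(m+n)$; when $d' \geq 2$ the same three-case argument (splitting on whether the residues of $m$ and $n$ modulo $d'$ are below or at least $d'/2$) delivers the inequality verbatim. Invoking Theorem 1 then gives that $S\{m,n:k\}$ is a polynomial with non-negative integer coefficients, completing the proof.

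The main obstacle is the reduction step in the second paragraph: recognizing that the atom count $log_d\{n:k\}!$ for the $k$-divisible factorial collapses to the single floor $\lfloor n/d' \rfloor$ with $d' = d/\gcd(d,k)$, rather than splitting into a more complicated expression. Once this identity is in hand, the $k$-divisible problem is literally the ordinary super FiboCatalan problem with modulus $d'$, and no new casework is required; note also that the specialization $k=1$ returns $d' = d$ and recovers Theorem 2 as a sanity check.
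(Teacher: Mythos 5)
Your proposal is correct and follows essentially the same route as the paper: both identify the key fact that $log_d \{n:k\}! = \lfloor n/M_{d,k} \rfloor$ where $M_{d,k} = d/\gcd(d,k)$ (your $d'$), and then reduce to the floor-function inequality already established by the case analysis in the proof for the ordinary Lucanomial super Catalan numbers. In fact you supply details the paper omits --- the divisibility argument justifying the formula for $log_d\{n:k\}!$ and the separate check of the $d'=1$ case --- so your write-up is, if anything, more complete.
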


\begin{proof}
To begin, we note that the Lucas atom $P_d$ divides terms at intervals of length $d/ gcd(d,k)$ in $ \{ n:k \}! $.  Let $M_{d,k} = d/ gcd(d,k)$.  Then 
\[
log_d \{ n:k \}! = \lfloor n/M_{d,k} \rfloor.
\]

The proof now proceeds by using the Division Algorithm and cases as in the proof of the similar result for the lucanomials so we omit the details here.
\end{proof}

\section{Open Problems}
The problem of finding a combinatorial interpretation of the super FiboCatalan numbers remains an interesting open problem, yet will likely prove challenging given that there is a combinatorial interpretation for the super Catalan numbers in only a handful of cases.

In addition, the super Catalan numbers satisfy a number of interesting binomial identities, such as this identity of von Szily (1894), which can be found in \cite{Ges}, Eq. (29), p. 11:
\[
S(m,n) = \sum_{k \in {\mathbb{Z}}} (-1)^k \binom{2m}{m+k} \binom{2n}{n+k}.
\]
Mikic \cite{Mi1} recently proved the following alternating convolution formula for the super Catalan numbers:
\[
\sum_{k=0}^{2n} (-1)^k \binom{2n}{k} S(k,l) S(2n-k,l) = S(n,l) S(n+l,n)
\]
for all non-negative integers $n$ and $l$.  Mikic \cite{Mi2} also proved a similar identity for the Catalan numbers:
\[
\sum_{k=0}^{2n} (-1)^k \binom{2n}{k} C_k C_{2n-k} = C_n \binom{2n}{n}.
\]

We conjecture that many of these identities have analogues for the super FiboCatalan numbers and are interested in exploring these analogues in further research.

\section{Ackknowledgements}  

The author would like to thank Bruce Sagan and the participants of the MSU Combinatorics seminar for their interest in this topic and for their helpful comments on related problems.


\begin{thebibliography}{00}

\bibitem{AMMR}  Alecci, G., Miska, P., Murru, N., Romeo, G., On alternative definition of Lucas atoms and their p-adic valuations, arxiv:2308.10216 [math.CO], (2023).


\bibitem{ACS}  Aliniaeifard, F., Bergeron, N., Ceballos, C., Denton, T., Li Xiao, S., Algebraic Combinatorics Seminar, Fields Institute, 2013-2015, http://garsia.math.yorku.ca/fieldseminar/.

\bibitem{AlG}  Allen, E. and Gheorghiciuc, I., A weighted interpretation for the super Catalan numbers, Journal of Integer Sequences, Vol. 17, Article 14.10.7, (2014).

\bibitem{BeP}  Benjamin, A. and Plott, S., A combinatorial approach to Fibonomial coefficients, Fibonacci Quarterly, 46/47(1):7-9, 2008/09.

\bibitem{BeQ} Benjamin, A. and Quinn, J., {\it {Proofs That Really Count}}, Vol. 27 of {\it {The Dolciani Mathematical Expositions}}, Mathematical Association of America, Washington, DC, 2003.

\bibitem{BCMS} Bennett, C., Carrillo, J., Machacek, J., and Sagan, B., Combinatorial interpretations of Lucas analogues of binomial coefficients and Catalan numbers, Annals of Combinatorics, Volume 24, pp. 503-530, (2020).

\bibitem{Cat}  Catalan, E., Question 1135, Nouvelles Annales de Mathematiques:  Journal des Candidats aux Ecole Polytechnic et Normale, Series 2, 13, (1874), 207.

\bibitem{ChW}  Chen, X. and Wang, J., The super Catalan numbers $S(m, m+s)$ for $s \leq 4$, arXiv:1208.4196v1 [math.CO] (2012).

\bibitem{Gar}  Garrett, K., A Determinant Identity that Implies Rogers-Ramanujan, The Electronic Journal of Combinatorics, {\bf{12}}, \#R35 (2005).

\bibitem{Ges}  Gessel, I., Super ballot numbers, J. Symb. Comput. 14 (1992), 179-194.

\bibitem{GeX}  Gessel, I. and Xin, G., A combinatorial interpretation of the numbers $6 (2n)! / n! (n+2)!$, J. Integer Seq. 8, Article 15.2.3, (2005).

\bibitem{GhO}  Gheorghiciuc, I. and Orelowitz, G., Super-Catalan Numbers of the Third and Fourth Kind, arXiv:2008.00133v1 [math.CO] (2020).

\bibitem{Mi1}  Mikic, J., On a new alternating convolution formula for the super Catalan numbers, arXiv:2110.04805v1 [math.CO] (2021).

\bibitem{Mi2}  Mikic, J., Two new identities involving the Catalan numbers and sign-reversing involutions, J. Integer Sequences {\bf{23}}, Article 20.1.6 (2020)

\bibitem{SaS}  Sagan, B. and Savage, C., Combinatorial interpretations of binomial coefficient analogues related to Lucas sequences, Integers 10: A52, 697-703 (2010).

\bibitem{SaT}  Sagan, B. and Tirrell, J., Lucas atoms, Advances in Mathematics, 374, (2020).

\end{thebibliography}
\end{document}